\numberwithin{equation}{section}
\newcommand{\beq}{\begin{equation}}
\newcommand{\eeq}{\end{equation}}
\newcommand{\beqs}{\begin{eqnarray*}}
\newcommand{\eeqs}{\end{eqnarray*}}
\newcommand{\beqn}{\begin{eqnarray}}
\newcommand{\eeqn}{\end{eqnarray}}
\newcommand{\beqa}{\begin{array}}
\newcommand{\eeqa}{\end{array}}
\newtheorem{definition}{Definition}
\newtheorem{lemma}{Lemma}
\newtheorem{remark}{Remark}
\newtheorem{theorem}{Theorem}
\newtheorem{example}{Example}
\newtheorem{corollary}{Corollary}
\title  {iteration problem for several chaos in non-autonomous discrete system }
\thanks{*Corresponding author.}
\thanks{E-mail address: liubingwen2016@mail.zjxu.edu.cn (B. Liu)}
\begin{document}




\bibliographystyle{plain}

\maketitle

\baselineskip=15.8pt
\parskip=3pt

\centerline {\bf   \small Hongbo Zeng$^{a}$, Chuangxia Huang$^{a}$, Bingwen Liu$^{b *}$ }
\centerline {$^{a}$School of Mathematics and Statistics, Changsha University of Science and Technology;}
\centerline {Hunan Provincial Key Laboratory of Mathematical Modeling and Analysis in Engineering,}
\centerline {Changsha 410114, Hunan, China}
\centerline {$^{b}$ College of Data Science, Jiaxing University, Jiaxing, 314001, Zhejiang,  China}

\vskip20pt

\noindent {\bf Abstract}:
In this paper we investigate the iteration problem for several chaos in non-autonomous discrete system $(X, f_{ 1,\infty} )$. Firstly, we prove that the Li-Yorke chaos of a non-autonomous discrete dynamical system is preserved under iterations when $f_{1,\infty}$ converges to $f$, which weakens the condition in the literature that $f_{1,\infty}$ uniformly converges to $f$. Besides, we prove that both DC2' and Kato's chaos of a non-autonomous discrete dynamical system are iteration invariants. Additionally, we give a sufficient condition for non-autonomous discrete dynamical system to be Li-Yorke chaos. Finally, we give an example to show that the DC3 of a non-autonomous discrete dynamical system is not inherited under iterations, which partly answers an open question proposed by Wu and Zhu(Chaos in a class of non-autonomous discrete systems, Appl.Math.Lett. 2013,26:431-436).

 \vskip20pt
 \noindent{\bf Key Words:}  iteration invariant; distributional chaos; Li-Yorke chaos; non-autonomous discrete system.
 \vskip20pt

\vskip20pt

\baselineskip=15.8pt
\parskip=3pt


\maketitle

\baselineskip=15.8pt
\parskip=3.0pt




\section{Introduction}
\noindent  The discovery of chaos phenomenon and the initiation of chaos theory are one of the
greatest scientific discoveries in the last century. The study on chaos has become a major
project in nonlinear science, which has gotten a rapid development and rich achievements. There exists a chaotic phenomenon in almost all fields relating to dynamical progress and the chaos theory is a hot topic
in area of topological dynamics, which has had a great influence on modern
science including natural science and many humanities. Li and Yorke gave the definition of chaos first in 1975 (see \cite{a1}). Since then, different people from different
fields gave different definitions of chaos under their
understanding of the subject, such as Li-Yorke chaos, distributional chaos (i.e. Schweizer-Smital chaos, see \cite{a2}), Devaney chaos \cite{a3}, Kato's chaos \cite{a4}, etc. So it is an important and significant question to understand the relation among the various definitions. Nowadays, there are many results about that. Among them, distributional chaos is one of the important definition.  And more and more researchers give their attention to the properties of distributional chaos. Later, three mutually nonequivalent versions of distributional chaos of type 1-3 ($DC1-DC3$) were considered \cite{6f}. Recently, the author gave a the definitions of DC2' in \cite{a13}, witch is between DC1 and DC3.

\par
   As a natural extension of autonomous discrete dynamical systems (Abbrev. ADS), non-autonomous discrete dynamical systems (NDS) are an important part of topological
dynamical systems. Compared with classical dynamical systems (ADS), NDS can
describe various dynamical behaviors more flexibly and conveniently. Indeed, most of the natural phenomena are subjected to time-dependent external forces
and their modeling usually contain time-dependent parameters, modulation, and various
other effects. Meanwhile, the dynamics of non-autonomous discrete systems has became
an active research area, obtaining results on topological entropy, sensitivity, mixing properties, chaos, and other properties. Generally, non-autonomous systems have richer dynamics
than autonomous systems. Kolyada and Snoha\cite{a14} firstly studied the chaotic behavior of NDS. The research of the complex behavior of NDS is recently very intensive, see \cite{1a,a15,a17,a19,a191,a192,a193} and the references therein.
Assume that $\mathbb{N}=\{1,2,3,...\}$. Let $(X,d)$ be a compact metric
space and $f_n : X \rightarrow X$ be a sequence of continuous functions, where $n\in \mathbb{N}$. An non-autonomous discrete
dynamical systems (NDS) is a pair $(X, f_{ 1,\infty} )$ where $f_{ 1,\infty}= ( f_n )_{n=1}^\infty$. Given $i,n\in \mathbb{N}$.
Define the composition
$f_i^n:= f_{i+(n-1)} \circ \cdot\cdot\cdot \circ f_i$ ,
and usual $f_i^0=id_X$. In particular, if $f_n = f$ for all $n \in \mathbb{N}$, the pair $(X, f_{ 1,\infty} )$ is
just the classical discrete system $(X, f)$. The orbit of
a point $x$ in $X$ is the set
$Orb(X, f_{ 1,\infty}):=\{x,f_1^1(x),f_1^2(x),...,f_1^n(x),...\}$,
which can also be described by the difference equation $x_0 = x$ and $x_{n+1} = f_n (x_n )$.
For any $k \in \mathbb{N}$, the $k^{th}$-iterate of NDS $(X, f_{ 1,\infty} )$ is defined by
$f_{ 1,\infty}^{[k]}:=(f_{k(n-1)+1}^k)_{n=1}^\infty$.
Cnovas\cite{a19} studied the limit behaviour of sequence with the form $f_n \circ \cdot\cdot\cdot \circ f_1 (x)$,
$x \in [0,1]$, and wandered whether the simplicity (resp., chaoticity) of $f$ implies the
simplicity (resp., chaoticity) of $f_{ 1,\infty}$ , when $f_{ 1,\infty}$ converges uniformly to a map $f$. In general autonomous discrete system $(X, f )$, many authors (see, for instance, paper \cite{a21,a23} and the references therein) considered the iteration invariants of $f$ (for example, Li-Yorke chaos, distributional
chaos, large deviations theorem, shadowing property) and proved that $(X, f )$ is Li-Yorke chaotic, $DCi (i = 1,
2,2' )$ if and only if $f^N$
is too for any $N \in \mathbb{N}$,  that is, for all $N>0$, the compositional systems preserve chaos of the primary systems. But it is invalid for Devaney chaos and DC3. Recently,
the authors \cite{1a,a20} proved that the Li-Yorke chaos, DC1, DC2 and $(\mathcal{F}_1 ,\mathcal{F}_2 )$-Chaos
of NDS which converges uniformly are all inherited under iterations. Meanwhile, Wu
and Zhu\cite{1a} posed an open problem, they asked whether DC3 of $f_{ 1,\infty}$ is inheried under iterations, we also discuss and partly solve the problem in this paper. Besides, Shao et al.\cite{a17} proved that Li-Yorke $\delta$-chaos and distributional $\delta'$-chaos in a sequence are equivalent for NDS on
compact spaces, they also provided sufficient conditions for NDS to be distributionally chaotic.
Motivated by these, we would like to further investigate the iteration invariance of chaos in non-autonomous
discrete systems.

\par
   This paper is organized as follows. In Section 2, we will first state some preliminaries, definitions and some lemmas. The main conclusions will be given in Section 3.

\section{Preparations and lemmas}
In this section, we mainly give some different concepts of chaos for NDS and some lemmas. We always suppose that $(X, f_{ 1,\infty} )$  is a non-autonomous discrete system and that all the maps are continuous from $X$ to $X$ in the following context.

\begin{definition}
 If there exists an uncountable subset $S\subseteq X$ such that for any different points $x,y\in S$  we have
$$\liminf_{n\rightarrow\infty} d(f_1^{n}(x), f_1^{n}(y))=0,\ \limsup_{n\rightarrow\infty} d(f_1^{n}(x), f_1^{n}(y))>0,$$
then $f_{ 1,\infty}$ is called Li-Yorke chaos.
\end{definition}

\begin{definition}
Let
$$\Phi(f_{ 1,\infty},x,y,t)=\liminf_{n\rightarrow\infty}\frac{1}{n}\sharp \{0\leq i\leq n-1\mid
d(f_1^{i}(x), f_1^{i}(y))<t\},$$
$$\Phi^{*}(f_{ 1,\infty},x,y,t)=\limsup_{n\rightarrow\infty}\frac{1}{n}\sharp \{0\leq i\leq n-1\mid
d(f_1^{i}(x), f_1^{i}(y))<t\},$$ where $\sharp A$ denotes the cardinality of the set $A$.
If there exists an uncountable subset $S\subseteq X$ such that for any different points  $x,y\in S$ we have

(1) $\Phi(f_{ 1,\infty},x,y,\varepsilon)=0$ for some $\varepsilon >0$ and $\Phi^{*}(f_{ 1,\infty},x,y,t)=1$ for all $t>0$, then $f_{ 1,\infty}$ is called distributional chaos(or DC1).

(2) $\Phi(f_{ 1,\infty},x,y,\varepsilon)>0$ for some $\varepsilon >0$ and $\Phi^{*}(f_{ 1,\infty},x,y,t)=1$ for all $t>0$, then $f_{ 1,\infty}$ is called DC2.

(3)$\Phi(f_{ 1,\infty},x,y,\varepsilon)=0$ for some $\varepsilon >0$ and $\Phi^{*}(f_{ 1,\infty},x,y,t)>0$ for all $t>0$, then $f_{ 1,\infty}$ is called DC2'.

(4)$\Phi(f_{ 1,\infty},x,y,\varepsilon)<\Phi^{*}(f_{ 1,\infty},x,y,t)=1$ for all $t\in J$, where $J$ is some nondegenerate interval, then $f_{ 1,\infty}$ is called DC3.
\end{definition}

\begin{definition}
Let $\{p_k\}_{k=1}^{\infty}$ be a sequence of positive integers and
$$\Phi(f_{ 1,\infty},x,y,t,p_k)=\liminf_{n\rightarrow\infty}\frac{1}{n}\sharp \{0\leq i\leq n-1\mid
d(f_1^{p_i}(x), f_1^{p_i}(y))<t\},$$
$$\Phi^{*}(f_{ 1,\infty},x,y,t,p_k)=\limsup_{n\rightarrow\infty}\frac{1}{n}\sharp \{0\leq i\leq n-1\mid
d(f_1^{p_i}(x), f_1^{p_i}(y))<t\}.$$
If there exists an uncountable subset $S\subseteq X$ such that for any different points  $x,y\in S$ we have
$\Phi(f_{ 1,\infty},x,y,\varepsilon,p_k)=0$ for some $\varepsilon >0$ and $\Phi^{*}(f_{ 1,\infty},x,y,t,p_k)=1$ for all $t>0$, then $f_{ 1,\infty}$ is called distributively chaotic in a sequence. If there exist an uncountable subset $S\subseteq X$ and $\varepsilon >0$ such that for any different points  $x,y\in S$ we have
$\Phi(f_{ 1,\infty},x,y,\varepsilon,p_k)=0$ and $\Phi^{*}(f_{ 1,\infty},x,y,t,p_k)=1$ for all $t>0$, then $f_{ 1,\infty}$ is called uniformly distributively chaotic in a sequence.

\end{definition}

\begin{definition}
The non-autonomous system $(X, f_{ 1,\infty} )$  is said to be sensitive, if there is $\delta>0$ such that for any nonempty open set $U\subset X$, there exist $x,y\in U$ and $n\in \mathbb{N}$ such that $d(f_1^n(x),f_1^n(y))>\delta$. The non-autonomous system $(X, f_{ 1,\infty} )$  is said to be accessible, if for any $\varepsilon>0$ and any nonempty open set $U,V\subset X$, there exist $x\in U,y\in V$ and $n\in \mathbb{N}$ such that $d(f_1^n(x),f_1^n(y))<\varepsilon$. The non-autonomous system $(X, f_{ 1,\infty} )$ is said to be Kato's chaotic, if it is sensitive and accessible.

\end{definition}
Denote $N_{f_{1,\infty}}(U,\delta)=\{n\in\mathbb{N}: \text{there exist} \ x,y\in U \ \text{such that } d(f_1^n(x),f_1^n(x))<\delta \}$ for any nonempty open set $U$ of $X$.
\begin{definition}
A non-autonomous system $(X, f_{ 1,\infty} )$ is said to be finitely generated, if there exists a finite set $F$ of continuous self maps on $X$ such that each $f_i$ of $f_{ 1,\infty}$ belongs to $F$.
\end{definition}

For the convenience of the following proof, let $$\xi_n(f_{ 1,\infty},x,y,t):=\frac{1}{n}\sharp \{0\leq i\leq n-1\mid
d(f_1^{i}(x), f_1^{i}(y))<t\},$$
$$\delta_n(f_{ 1,\infty},x,y,t):=\frac{1}{n}\sharp \{0\leq i\leq n-1\mid
d(f_1^{i}(x), f_1^{i}(y))\ge t\},$$

\begin{lemma}\label{yinli1}
Let $\{n_k\}$ an increasing sequence of positive integers, then for any $n \in\mathbb{N}$, there exist a integers $r(0\le r<n)$, a subsequence $\{n_{k_j}\}$ of $\{n_k\}$ and an increasing sequence of positive integers $\{q_j\}$ such that $n_{k_j}=nq_j+r$.
\end{lemma}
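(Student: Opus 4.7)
The plan is to apply the division algorithm together with the pigeonhole principle. First I would fix $n \in \mathbb{N}$ and, for each $k$, use Euclidean division to write
\[
n_k = n q_k + r_k, \qquad 0 \le r_k < n,
\]
where $q_k \in \mathbb{N} \cup \{0\}$ and $r_k \in \{0,1,\dots,n-1\}$ are uniquely determined.

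Next I would observe that the sequence of remainders $\{r_k\}$ takes values in the finite set $\{0,1,\dots,n-1\}$. By the pigeonhole principle, there must exist some $r \in \{0,1,\dots,n-1\}$ such that $r_k = r$ for infinitely many indices $k$. Selecting these indices gives a subsequence $\{n_{k_j}\}$ of $\{n_k\}$ with the property that $n_{k_j} = n q_{k_j} + r$ for every $j$; I then set $q_j := q_{k_j}$.

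Finally I would verify that $\{q_j\}$ is an increasing sequence of positive integers. Since $\{n_k\}$ is strictly increasing, so is its subsequence $\{n_{k_j}\}$; from $n_{k_j} = n q_j + r$ with $r$ fixed one obtains $q_{j+1} - q_j = (n_{k_{j+1}} - n_{k_j})/n > 0$, so $\{q_j\}$ is strictly increasing. Moreover, by discarding at most one initial term we may assume $q_j \ge 1$, giving an increasing sequence of positive integers as required.

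There is no real obstacle here; the statement is essentially a restatement of the pigeonhole principle applied to residues modulo $n$, and the only minor bookkeeping is to confirm strict monotonicity of $\{q_j\}$ and positivity (which follows from $n_{k_j} \to \infty$).
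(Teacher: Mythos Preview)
Your argument is correct; this is exactly the standard pigeonhole-on-residues proof, and the minor bookkeeping about strict monotonicity and positivity of $\{q_j\}$ is handled properly. The paper itself states this lemma without proof, so there is nothing to compare against --- your write-up is the natural justification the authors presumably had in mind.
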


\begin{lemma}\label{yinli2}\cite{1a}
Assume that non-autonomous discrete system $(X,f_{1,\infty})$ converges uniformly to a map $f$. Then for any $k\ge 2$, the sequence $(f_n^k)_{n=1}^{\infty}$ converges uniformly to $f^k$. Particularly, for any increasing sequence of positive integers $\{m_n\}_{n=1}^{\infty}$, we have $\{f_{m_n}^k\}_{n=1}^{\infty}$
converges uniformly to $f^k$.
\end{lemma}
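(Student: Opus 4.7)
The plan is to prove the main claim by induction on $k$, and then derive the subsequence statement as an immediate corollary. The base case $k=1$ is just the hypothesis $f_n \to f$ uniformly, since by the convention in the paper $f_n^1 = f_n$ and $f^1 = f$.

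For the inductive step, I would exploit the recursion $f_n^k = f_{n+k-1}\circ f_n^{k-1}$, assuming as inductive hypothesis that $f_n^{k-1}\to f^{k-1}$ uniformly on $X$. Fix $\varepsilon>0$. Because $X$ is compact and $f$ is continuous, $f$ is uniformly continuous, so I can pick $\delta>0$ with $d(y,y')<\delta \Rightarrow d(f(y),f(y'))<\varepsilon/2$. Then I would split
\begin{equation*}
d(f_n^k(x),f^k(x)) \le d\bigl(f_{n+k-1}(f_n^{k-1}(x)),\,f(f_n^{k-1}(x))\bigr) + d\bigl(f(f_n^{k-1}(x)),\,f(f^{k-1}(x))\bigr)
\end{equation*}
and control the two pieces separately. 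The first piece is bounded by $\sup_{y\in X} d(f_{n+k-1}(y),f(y))$, which is less than $\varepsilon/2$ for all sufficiently large $n$ by the uniform convergence $f_m\to f$ (applied at index $m=n+k-1$). The second piece is less than $\varepsilon/2$ by the uniform continuity of $f$, once $n$ is large enough that $d(f_n^{k-1}(x),f^{k-1}(x))<\delta$ for every $x\in X$ — such $n$ exist by the inductive hypothesis. Taking $n$ large enough to satisfy both constraints simultaneously gives $\sup_{x\in X} d(f_n^k(x),f^k(x))<\varepsilon$, completing the induction.

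The ``particularly'' statement is then essentially free: since $\{m_n\}_{n=1}^{\infty}$ is strictly increasing, $\{f_{m_n}^k\}_{n=1}^{\infty}$ is a subsequence of $\{f_n^k\}_{n=1}^{\infty}$, and any subsequence of a uniformly convergent sequence still converges uniformly to the same limit. So this requires no new work beyond the inductive argument.

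I do not expect any serious obstacle here; the only subtlety worth flagging is that one must apply the uniform convergence of $f_m\to f$ at the shifted index $m=n+k-1$ rather than at $n$ itself, so the final threshold for $n$ is adjusted by the additive constant $k-1$. This is routine because $k$ is fixed in the induction. Compactness of $X$ enters in exactly one place, namely to upgrade continuity of $f$ to uniform continuity; without compactness the argument would have to be reorganized.
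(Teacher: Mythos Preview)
Your argument is correct. Note, however, that the paper does not supply its own proof of this lemma: it is quoted from Wu and Zhu \cite{1a}, so there is nothing in the present paper to compare against. Your induction on $k$ via the recursion $f_n^k=f_{n+k-1}\circ f_n^{k-1}$, splitting with the triangle inequality and invoking uniform continuity of $f$ on the compact space $X$, is the standard route and goes through without issue; the subsequence remark is indeed immediate.
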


\begin{lemma}\label{yinli3}
Assume that non-autonomous discrete system $(X,f_{1,\infty})$ converges uniformly to a map $f$. Then for any $\varepsilon >0$ and any $k\in \mathbb{N}$, there exist $\xi$ such that for any $n\in\mathbb{N}$ and any pair $x,y \in X$ with $d(x,y)<\xi$, $d(f_n^k(x),f_n^k(y))<\frac{\varepsilon}{2}$.
\end{lemma}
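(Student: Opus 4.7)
The plan is to combine uniform convergence with the compactness of $X$ to obtain equicontinuity of the family $\{f_n^k\}_{n=1}^\infty$, which is exactly what the lemma asserts. Concretely, I would invoke Lemma~\ref{yinli2} to upgrade $f_n\to f$ to $f_n^k \to f^k$ uniformly, use uniform continuity of the single limit $f^k$ on the compact space $X$, and then handle the finitely many ``exceptional'' indices separately by brute force.

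More precisely, fix $\varepsilon>0$ and $k\in\mathbb{N}$. By Lemma~\ref{yinli2}, the sequence $(f_n^k)_{n=1}^\infty$ converges uniformly to $f^k$, so there exists $N\in\mathbb{N}$ such that for every $n\ge N$ and every $x\in X$,
\[
d\bigl(f_n^k(x),\,f^k(x)\bigr)<\frac{\varepsilon}{6}.
\]
Since $f^k$ is continuous on the compact metric space $(X,d)$, it is uniformly continuous, so there is $\eta>0$ with $d(f^k(x),f^k(y))<\varepsilon/6$ whenever $d(x,y)<\eta$. Combining these, for $n\ge N$ and $d(x,y)<\eta$ a standard triangle-inequality estimate
\[
d\bigl(f_n^k(x),f_n^k(y)\bigr)\le d\bigl(f_n^k(x),f^k(x)\bigr)+d\bigl(f^k(x),f^k(y)\bigr)+d\bigl(f^k(y),f_n^k(y)\bigr)<\frac{\varepsilon}{2}
\]
holds.

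For the finitely many indices $n\in\{1,2,\dots,N-1\}$, the map $f_n^k$ is a composition of continuous maps on the compact space $X$, hence uniformly continuous; so for each such $n$ there exists $\xi_n>0$ with $d(x,y)<\xi_n \Rightarrow d(f_n^k(x),f_n^k(y))<\varepsilon/2$. Setting
\[
\xi=\min\{\eta,\xi_1,\xi_2,\dots,\xi_{N-1}\}>0
\]
then satisfies the claim for all $n\in\mathbb{N}$ simultaneously. The only non-routine step is the application of Lemma~\ref{yinli2}; beyond that the argument is the familiar ``uniformly convergent sequence of continuous functions on a compact space is equicontinuous'' observation, and I do not expect a serious obstacle.
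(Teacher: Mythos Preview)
Your proposal is correct and follows essentially the same route as the paper: invoke Lemma~\ref{yinli2} to get uniform closeness of $f_n^k$ to $f^k$ for large $n$, use (uniform) continuity of $f^k$ on the compact space $X$, combine via the triangle inequality, and treat the finitely many remaining indices by uniform continuity of each $f_n^k$. The only cosmetic difference is that you take a separate $\xi_n$ for each $n<N$ and then minimize, whereas the paper writes a single $\xi_2$ for all $n\le N$ at once; your version is slightly more explicit but the argument is the same.
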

\begin{proof}
For any $\varepsilon >0$ and any $k$, on the one hand, according to Lemma \ref{yinli2}, there exist $N$ such that for any $n>N$ and any $x\in X$, we have
\begin{equation}\label{yishi}
d(f_n^k(x),f^k(x))<\frac{\varepsilon}{6} .
\end{equation}
In addition, $f$ is a continuous map because $(X,f_{1,\infty})$ converges uniformly to $f$, which implies $f^k$ is continuous for any $k$, then for $\varepsilon >0$ above, there exist $\xi_1$ such that for any pair $x,y \in X$ with $d(x,y)<\xi_1$,
\begin{equation}\label{ershi}
d(f^k(x),f^k(y))<\frac{\varepsilon}{6}.
\end{equation}
Combining (\ref{yishi}) and (\ref{ershi}), we have $$d(f_n^k(x),f_n^k(y))<d(f_n^k(x),f^k(x))+d(f^k(x),f^k(y))+d(f_n^k(y),f^k(y))<\frac{\varepsilon}{2}.$$
on the other hand, if $n\le N$, since $f_n^k$ is continuous,  there exist $\xi_2$ such that for any $n\le N$ and any pair $x,y \in X$ with $d(x,y)<\xi_2$, \begin{equation*}
d(f_n^k(x),f_n^k(y))<\frac{\varepsilon}{2}.
\end{equation*}
Finally, for any $\varepsilon >0$ and any $k$, we take $\xi=min\{\xi_1,\xi_2\}$, such that for any pair $x,y \in X$ with $d(x,y)<\xi$, $d(f_n^k(x),f_n^k(y))<\frac{\varepsilon}{2}$.
\end{proof}
\begin{remark}
The lemma \ref{yinli3} above is extension of the corollary 2.2 in \cite{1a}.
\end{remark}

\begin{lemma}(\cite{a49})\label{yinli20}
There is an uncountable subset $E$ in $\Sigma_2$ such that for any different points $s=s_0s_1...,t=t_0t_1...\in E$, we have $ s_n=t_n$ for infinitely many $n$ and $s_m\neq t_m$ for infinitely many $m$, where $\Sigma_2$ denotes the symbolic dynamical system.
\end{lemma}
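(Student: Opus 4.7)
The plan is to give an explicit construction of $E$ by coding arbitrary binary sequences into $\Sigma_2$ along a fixed partition of the index set. First, I would partition $\mathbb{N}_0$ into countably many pairwise disjoint infinite subsets $B_0, B_1, B_2, \ldots$; one explicit choice is $B_i = \{2^i(2k+1)-1 : k \ge 0\}$, but any infinite partition of $\mathbb{N}_0$ into infinitely many infinite pieces will do. The role of $B_0$ will be to serve as a fixed ``reservoir of agreement,'' while the blocks $B_i$ for $i\ge 1$ will encode a free binary parameter.

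Next, for each $\alpha = \alpha_1\alpha_2\ldots \in \{0,1\}^{\mathbb{N}}$, I would define $s^\alpha \in \Sigma_2$ by setting $s^\alpha_n := 0$ when $n \in B_0$ and $s^\alpha_n := \alpha_i$ when $n \in B_i$ for some $i \ge 1$. Let $E := \{s^\alpha : \alpha \in \{0,1\}^{\mathbb{N}}\}$. The map $\alpha \mapsto s^\alpha$ is injective, since any two distinct $\alpha, \beta$ disagree at some coordinate $i \ge 1$, which forces $s^\alpha$ and $s^\beta$ to differ on every element of the infinite block $B_i$; hence $E$ has cardinality $2^{\aleph_0}$ and in particular is uncountable.

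For the two infinitude requirements, take any $s^\alpha \ne s^\beta$ in $E$. On $B_0$ both sequences are identically $0$, so $s^\alpha_n = s^\beta_n$ for every $n \in B_0$, which gives infinitely many coordinates of agreement. Since $\alpha \ne \beta$, there is some $i \ge 1$ with $\alpha_i \ne \beta_i$, and then $s^\alpha_m \ne s^\beta_m$ for every $m$ in the infinite set $B_i$, giving infinitely many coordinates of disagreement. Both conditions of the lemma are therefore satisfied.

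I do not anticipate any genuine obstacle here: the reservoir block $B_0$ is engineered to force infinite agreement, and the coding blocks $B_i$ (for $i\ge 1$) automatically transfer any single coordinate disagreement of the codes $\alpha, \beta$ into an infinite disagreement of $s^\alpha, s^\beta$. A conceptually different but equally workable route would be a Cantor-style tree construction, in which at the $n$-th level of a binary tree one chooses two admissible continuations engineered so that both agreement and disagreement with every sibling branch are maintained cofinally; this is more flexible but less transparent than the partition-coding construction above, so I would present the latter.
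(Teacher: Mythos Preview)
Your construction is correct and complete: the reservoir block $B_0$ guarantees infinite agreement, and any single coordinate of disagreement between $\alpha$ and $\beta$ propagates to an entire infinite block $B_i$ of disagreement in $s^\alpha$ and $s^\beta$, so both conditions hold and $E$ has cardinality $2^{\aleph_0}$.

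As for comparison with the paper: the paper does not prove this lemma at all. It is stated with a citation to \cite{a49} (Wang, Liao, Yang, \emph{Ann.\ Polon.\ Math.}, 2002) and used as a black box in the proof of Theorem~3.6. So your proposal supplies strictly more than the paper does. The original construction in \cite{a49} is in the same spirit as yours---an explicit coding of $\{0,1\}^{\mathbb{N}}$ into $\Sigma_2$ along designated blocks---so there is no substantive methodological difference to report.
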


\section{Main results}

\begin{theorem}
Assume that non-autonomous discrete system $(X,f_{1,\infty})$ converges to a map $f$. Then for any $k\in\mathbb{N}$,
 $f_{1,\infty}$ is Li-Yorke chaos if and only if $f_{1,\infty}^k$ is Li-Yorke chaos.
\end{theorem}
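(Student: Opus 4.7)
The plan is to prove the equivalence by establishing both implications, treating the backward direction as a short subsequence observation and concentrating effort on the forward direction, where I will combine Lemma \ref{yinli1} and Lemma \ref{yinli3} to handle the remainder when the witnessing indices are divided by $k$.

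For the backward direction, I would note that $(f_{1,\infty}^{[k]})_1^n = f_1^{kn}$, so if $S$ is a scrambled set for $f_{1,\infty}^{[k]}$ then for any distinct $x,y \in S$ the sequence $(d(f_1^{kn}(x), f_1^{kn}(y)))_n$ is a subsequence of $(d(f_1^n(x), f_1^n(y)))_n$. This immediately gives $\liminf_n d(f_1^n(x), f_1^n(y)) = 0$ and $\limsup_n d(f_1^n(x), f_1^n(y)) \ge \limsup_n d(f_1^{kn}(x), f_1^{kn}(y)) > 0$, so the same $S$ will witness Li-Yorke chaos of $f_{1,\infty}$.

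For the forward direction, I would fix a scrambled set $S$ for $f_{1,\infty}$, take a distinct pair $x,y \in S$, set $\varepsilon = \limsup_n d(f_1^n(x), f_1^n(y)) > 0$, and choose subsequences $\{m_j\}, \{n_j\}$ witnessing $d(f_1^{m_j}(x), f_1^{m_j}(y)) \to 0$ and $d(f_1^{n_j}(x), f_1^{n_j}(y)) > \varepsilon/2$. Applying Lemma \ref{yinli1} with parameter $k$, I would pass to further subsequences in which the indices all take the form $kq_\ell + r$ with a fixed remainder $r \in \{0,1,\ldots,k-1\}$. The case $r = 0$ would settle both halves at once, so the substantive task is the case $r > 0$.

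To finish the liminf estimate, I would decompose $f_1^{k(q_\ell + 1)} = f_{kq_\ell + r + 1}^{k-r} \circ f_1^{kq_\ell + r}$ and invoke Lemma \ref{yinli3} with exponent $k-r$ to transport the smallness of $d(f_1^{kq_\ell + r}(x), f_1^{kq_\ell + r}(y))$ through the short composition, concluding $\liminf_n d(f_1^{kn}(x), f_1^{kn}(y)) = 0$. To finish the limsup estimate, I would decompose $f_1^{kq_\ell + r} = f_{kq_\ell+1}^{r} \circ f_1^{kq_\ell}$ and apply Lemma \ref{yinli3} contrapositively with exponent $r$ and parameter $\varepsilon$, obtaining a threshold $\xi > 0$ for which the separation $d(f_{kq_\ell+1}^r(u), f_{kq_\ell+1}^r(v)) \ge \varepsilon/2$ forces $d(u,v) \ge \xi$; this yields $\limsup_n d(f_1^{kn}(x), f_1^{kn}(y)) \ge \xi > 0$. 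The main obstacle throughout will be this residual case $r > 0$, and the essential ingredient is the uniform equicontinuity across all starting indices that Lemma \ref{yinli3} extracts from the convergence hypothesis.
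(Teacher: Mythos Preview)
Your argument has a genuine gap: both the liminf and limsup halves of the forward direction rely on Lemma~\ref{yinli3}, but that lemma assumes $f_{1,\infty}$ converges \emph{uniformly} to $f$, whereas the present theorem only assumes pointwise convergence. The entire content of the theorem (see the remark following it) is precisely the relaxation from uniform to pointwise convergence, so invoking Lemma~\ref{yinli3} collapses your proof back to the already-known uniform case. With mere pointwise convergence there is no reason the family $\{f_n^r : n \in \mathbb{N}\}$ should be equicontinuous, and hence neither the ``transport of smallness'' step for the liminf nor its contrapositive for the limsup is justified.

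The paper's proof avoids equicontinuity altogether by exploiting compactness of $X$. For the liminf it passes to a subsequence along which $f_1^{n_i}(x)$ and $f_1^{n_i}(y)$ converge to a common point $x_0$, fixes the remainder $r$ via Lemma~\ref{yinli1}, and then uses pointwise convergence $f_n \to f$ at $x_0$ to conclude that $f_1^{k(q_i+1)}(x)$ and $f_1^{k(q_i+1)}(y)$ both tend to $f^{k-r}(x_0)$. For the limsup it extracts, by compactness, subsequential limits $a' = \lim_j f_1^{kp_{i_j}}(x)$ and $b' = \lim_j f_1^{kp_{i_j}}(y)$, and shows $f^r(a') = a \ne b = f^r(b')$, whence $a' \ne b'$. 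To repair your approach you must replace the appeals to Lemma~\ref{yinli3} with this compactness-plus-pointwise-limit reasoning.
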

\begin{proof}
Sufficiency is obvious by definitions. We will prove the necessity in the following.
Since $f_{1,\infty}$ is Li-Yorke chaos, there exists an uncountable subset $S\subseteq X$ such that for any different points $x,y\in S$ we have
\begin{equation}\label{3.1}
\liminf_{n\rightarrow\infty}d(f_1^n(x),f_1^n(y))=0,
\end{equation}
and
\begin{equation}\label{3.2}
\limsup_{n\rightarrow\infty}d(f_1^n(x),f_1^n(y))>0.
\end{equation}
For (\ref{3.1}), there exist  $x_0\in X$ and an increasing sequence $\{n_i\}$ of positive integers such that
$$\lim_{i\rightarrow\infty}f_1^{n_i}(x)=x_0,$$
and
$$\lim_{i\rightarrow\infty}f_1^{n_i}(x)=x_0.$$
According to Lemma \ref{yinli1}, there exist  $r$ with $0\le r <k$ and $\{q_i\}\subseteq \{n_i\}$ such that
$$\lim_{i\rightarrow\infty}f_{r+1}^{kq_i}f_1^{r}(x)=x_0,$$
and
$$\lim_{i\rightarrow\infty}f_{r+1}^{kq_i}f_1^{r}(y)=x_0.$$
Since $(f_n)_{n=1}^\infty$ converges to $f$, then for the $x_0\in X$,
$$\lim_{i\rightarrow\infty}f_{kq_i+r+1}^{l}(x_0)=f^l(x_0),$$
where $l=k-r$, so we have
$$\lim_{i\rightarrow\infty}f_{1}^{k(q_i+1)}(x)=\lim_{i\rightarrow\infty}f_{kq_i+r+1}^{l}f_1^{kq_i+r}(x)=f^l(x_0),$$
and
$$\lim_{i\rightarrow\infty}f_{1}^{k(q_i+1)}(y)=\lim_{i\rightarrow\infty}f_{kq_i+r+1}^{l}f_1^{kq_i+r}(y)=f^l(x_0).$$
Therefore
\begin{equation}\label{3.3}
\liminf_{n\rightarrow\infty}d(f_1^{kn}(x),f_1^{kn}(y))=0.
\end{equation}
For (\ref{3.2}), there exist  $a,b\in X (a\neq b)$ and an increasing sequence $\{n_i'\}$ of positive integers such that
$$\lim_{i\rightarrow\infty}f_1^{n_i'}(x)=a,$$
and
$$\lim_{i\rightarrow\infty}f_1^{n_i'}(x)=b.$$
As the same way, there exist  $r$ with $0\le r <k$ and $\{p_i\}\subseteq \{n_i'\}$ such that
$$\lim_{i\rightarrow\infty}f_1^{kp_i+r}(x)=a,$$
and
$$\lim_{i\rightarrow\infty}f_1^{kp_i+r}(y)=b.$$
So there exist  $a', b'\in X$ and $\{p_{i_j}\}\subseteq \{p_i\}$ such that
$$\lim_{j\rightarrow\infty}f_1^{kp_{i_j}}(x)=a',$$
and
$$\lim_{j\rightarrow\infty}f_1^{kp_{i_j}}(y)=b'.$$
Since
$$a=\lim_{j\rightarrow\infty}f_{kq_{i_j}+1}^{r}f_1^{kq_{i_j}}(x)=\lim_{j\rightarrow\infty}f_{kq_{i_j}+1}^{r}(a')=f^r(a')$$
and
$$b=\lim_{j\rightarrow\infty}f_{kq_{i_j}+1}^{r}f_1^{kq_{i_j}}(y)=\lim_{j\rightarrow\infty}f_{kq_{i_j}+1}^{r}(b')=f^r(b'),$$
we have $f^r(a')\neq f^r(b')$, so $a'\neq b'$.

Therefore
\begin{equation}\label{3.4}
\limsup_{n\rightarrow\infty}d(f_1^{kn}(x),f_1^{kn}(y))>0.
\end{equation}

Summing up (\ref{3.3}) and (\ref{3.4}), it follows that $f_{1,\infty}^k$ is Li-Yorke chaos.

\end{proof}

\begin{remark}
The theorem above weaken the conditions of theorem 2.5 in \cite{1a}, where it is required that $(X,f_{1,\infty})$ converges uniformly to a map $f$.
\end{remark}

\begin{theorem}\label{ag}
 Assume that non-autonomous discrete system $(X,f_{1,\infty})$ converges uniformly to a map $f$. Then for any $N\in\mathbb{N}$, Then $f_{1,\infty}$ is DC2' if and only if $f_{1,\infty}^N$ is too.
\end{theorem}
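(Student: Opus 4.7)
The plan is to use the same scrambled set $S$ for both implications, transferring each of the two DC2' conditions between $f_{1,\infty}$ and its $N$-th iterate with the help of Lemma \ref{yinli3}. Applying that lemma to each $k\in\{1,\dots,N-1\}$ and taking the minimum of the resulting moduli produces, for any target $\varepsilon>0$, a uniform $\eta>0$ such that for every $j\in\mathbb{N}$ and every $r\in\{0,1,\dots,N-1\}$, $d(f_1^{j}(x),f_1^{j}(y))<\eta$ implies $d(f_1^{j+r}(x),f_1^{j+r}(y))<\varepsilon/2$. Uniform convergence of $f_{1,\infty}$ to $f$ enters only through this equicontinuity, whose uniformity in the base index $j$ is essential.

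For necessity ($f_{1,\infty}$ DC2' implies $f_{1,\infty}^N$ DC2'), fix distinct $x,y\in S$. For the $\Phi$ condition, starting from $\Phi(f_{1,\infty},x,y,\varepsilon)=0$, the estimate above sends each $i$ with $d(f_1^{Ni}(x),f_1^{Ni}(y))<\eta$ to the block $\{Ni,Ni+1,\dots,Ni+N-1\}$ inside the $\varepsilon$-bad set of $f_{1,\infty}$, so $\xi_n(f_{1,\infty}^N,x,y,\eta)\le\xi_{Nn}(f_{1,\infty},x,y,\varepsilon)$; extracting a subsequence $m_l$ on which the right-hand side vanishes and rounding $m_l$ up to the nearest multiple of $N$ yields $\Phi(f_{1,\infty}^N,x,y,\eta)=0$. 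For the $\Phi^*$ condition, given $t>0$ apply the estimate with $\varepsilon=t$ and observe that each $j$ in the $\eta$-bad set of $f_{1,\infty}$ sends $\lceil j/N\rceil$ into the $t$-bad set of $f_{1,\infty}^N$; since $j\mapsto\lceil j/N\rceil$ is at most $N$-to-$1$, the induced density inequality transports $\Phi^*(f_{1,\infty},x,y,\eta)>0$ to $\Phi^*(f_{1,\infty}^N,x,y,t)>0$ after passing to $\limsup$.

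For sufficiency ($f_{1,\infty}^N$ DC2' implies $f_{1,\infty}$ DC2'), the $\Phi^*$ half is immediate: multiples of $N$ witnessing $\xi_n(f_{1,\infty}^N,x,y,t)$ already witness small distance for $f_{1,\infty}$, yielding $\xi_{Nn}(f_{1,\infty},x,y,t)\ge\xi_n(f_{1,\infty}^N,x,y,t)/N$ and hence $\Phi^*(f_{1,\infty},x,y,t)>0$. The $\Phi$ half again uses the equicontinuity: given $\Phi(f_{1,\infty}^N,x,y,\varepsilon)=0$, choose $\eta$ so that $d(f_1^j(x),f_1^j(y))<\eta$ forces $d(f_1^{N\lceil j/N\rceil}(x),f_1^{N\lceil j/N\rceil}(y))<\varepsilon$; the $N$-to-$1$ map $j\mapsto\lceil j/N\rceil$ now yields $\xi_m(f_{1,\infty},x,y,\eta)\le(1+O(1/m))\,\xi_{\lceil m/N\rceil+1}(f_{1,\infty}^N,x,y,\varepsilon)$, and evaluating along the subsequence realising the zero $\liminf$ on the right gives $\Phi(f_{1,\infty},x,y,\eta)=0$.

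The main obstacle is that Lemma \ref{yinli3} propagates closeness in one direction only: closeness at time $Nk$ forces closeness at the intermediate times $Nk+r$, but not the reverse. Both $\Phi$ halves therefore require that the threshold on the side carrying the $\liminf$ be chosen on the finer scale $\eta$, with the factor $N$ introduced by the rounding map $j\mapsto\lceil j/N\rceil$ absorbed into the $1/m$ density normalisation; without uniform convergence the modulus $\eta$ could depend on the starting index of the subcomposition and this absorption would fail.
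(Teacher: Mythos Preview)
Your proof is correct and follows essentially the same strategy as the paper: use the uniform-in-base-index equicontinuity from Lemma~\ref{yinli3} to transfer density estimates between $f_{1,\infty}$ and $f_{1,\infty}^N$, treating the $\Phi$ and $\Phi^*$ halves separately in each direction.

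There are two minor cosmetic differences worth noting. In the necessity $\Phi$ half you invoke equicontinuity to produce $\eta$, but the paper simply observes that the multiples of $N$ form a subset of all indices, so $\xi_{m_k}(f_{1,\infty}^N,x,y,\varepsilon)\le \xi_{n_k}(f_{1,\infty},x,y,\varepsilon)$ with the \emph{same} threshold $\varepsilon$; your use of $\eta<\varepsilon$ is harmless but unnecessary here. Conversely, in the sufficiency $\Phi^*$ half you take the simpler route (the same subset observation, losing a harmless factor $1/N$ in the density), whereas the paper applies Lemma~\ref{yinli3} again to avoid that loss; since DC2$'$ only requires $\Phi^*>0$, your shortcut is perfectly adequate. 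The remaining two halves (necessity $\Phi^*$ via the $N$-to-$1$ map $j\mapsto\lceil j/N\rceil$, and sufficiency $\Phi$ via its contrapositive) match the paper's argument up to notation.
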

\begin{proof}
Necessity. (1) Since $f_{1,\infty}$ is DC2', there exists an uncountable subset $S\subseteq X$ such that for any different points  $x,y\in S$ we have $\Phi(f_{1,\infty},x,y,\varepsilon)=0$ for some $\varepsilon >0$, then there exist an increasing sequence $\{n_k\}$ of positive integers such that for some $\varepsilon>0$,
\begin{equation}\label{3.5}
\lim_{k\rightarrow\infty}\frac{1}{n_k}\sharp \{0\leq i\leq n_k-1\mid
d(f_1^{i}(x), f_1^{i}(y))<\varepsilon\}=0,
\end{equation}
Put $m_k=[\frac{n_k}{N}]$, where $[\frac{n_k}{N}]$ denotes the integral part of $\frac{n_k}{N}$. Then for each $k$,
$$\xi_{m_k}(f_{1,\infty}^N,x,y,\varepsilon)\le \xi_{n_k}(f_{1,\infty},x,y,\varepsilon),$$
it follows from (\ref{3.5}) that for $k\rightarrow\infty$, $$\frac{1}{n_k}\xi_{m_k}(f_{1,\infty}^N,x,y,\varepsilon)\rightarrow 0,$$
 and further $$\frac{N}{n_k}\xi_{m_k}(f_{1,\infty}^N,x,y,\varepsilon)\rightarrow 0.$$ This gives for $k\rightarrow\infty$, $$\frac{1}{m_k}\xi_{m_k}(f_{1,\infty}^N,x,y,\varepsilon)\rightarrow 0.$$
 Therefore $$\Phi(f_{1,\infty}^N,x,y,\varepsilon)=0.$$

(2) For any $t>0$, by Lemma \ref{yinli3}, there exist $0<t_1<t$ such that for any $x,y\in X$ with $d(x,y)<t_1$, $d(f_n^i(x),f_n^i(y))<\frac{t}{2}$, for any $i\in \{1,...,N\}$ and any $n\in \mathbb{N}$.
Since $f_{1,\infty}$ is DC2', there exists an uncountable subset $S\subseteq X$ such that for any different points  $x,y\in S$ we have $\Phi^{*}(f_{1,\infty},x,y,t)>0$ for any $t>0$. Denote $\Phi^{*}(f_{1,\infty},x,y,t)=\beta$. Then the inequality above implies that there exists an increasing sequence $\{n_l\}$ of positive integers such that for any $l$,
\begin{equation}
\frac{1}{n_l}\sharp \{0\leq i< n_l\mid
d(f_1^i(x), f_1^i(y))<t_1\}>\frac{\beta}{2}.
\end{equation}
Then, for any $l$, one have $$\{0\leq i <n_l\mid
d(f_1^i(x), f_1^i(y))<t_1\}\subset \bigcup_{j=0}^{k-1}\{0\leq i <[\frac{n_l}{N}]+1\mid
d(f_1^{iN+j}(x), f_1^{iN+j}(y))<t_1\}.$$ And then this implies that there exist a subsequence $\{n_l'\}$ of $\{n_l\}$ and $0\le j< N-1$ such that for any $l$,
\begin{equation}
\begin{aligned}
&\sharp \{0\leq i< [\frac{n_l'}{N}]+1\mid
d(f_1^{iN+j}(x), f_1^{iN+j}(y))<t_1\}\\
\ge &\frac{1}{N}\sharp \{0\leq i< n_l'\mid
d(f_1^{i}(x), f_1^{i}(y))<t_1\}.
\end{aligned}
\end{equation}
By the choice of $t_1$, it follows that
\begin{equation}
\begin{aligned}
&\sharp \{0\leq i< [\frac{n_l'}{N}]+1\mid
d(f_1^{(i+1)N}(x), f_1^{(i+1)N}(y))<t\}\\
\ge &\sharp \{0\leq i< [\frac{n_l'}{N}]+1\mid
d(f_1^{iN+j}(x), f_1^{iN+j}(y))<t_1\}\\
\ge &\frac{1}{N}\sharp \{0\leq i< n_l'\mid
d(f_1^{i}(x), f_1^{i}(y))<t_1\}.
\end{aligned}
\end{equation}
Then
\begin{equation}
\begin{aligned}
&\Phi^{*}(f_{1,\infty}^N,x,y,t)\\
\ge &\limsup_{l\rightarrow\infty}\frac{\sharp \{0\leq i< [\frac{n_l'}{N}]+1\mid
d(f_1^{iN}(x), f_1^{iN}(y))<t\}}{[\frac{n_l'}{N}]+1}\\
\ge &\limsup_{l\rightarrow\infty}\frac{\sharp \{0\leq i< [\frac{n_l'}{N}]+1\mid
d(f_1^{(i+1)N}(x), f_1^{(i+1)N}(y))<t\}-1}{[\frac{n_l'}{N}]+1}\\
\ge &\limsup_{l\rightarrow\infty}\frac{\sharp \{0\leq i< n_l'\mid
d(f_1^{i}(x), f_1^{i}(y))<t_1\}-1}{N([\frac{n_l'}{N}]+1)}\\
> &\frac{\beta}{2}\\
> &0.
\end{aligned}
\end{equation}

Sufficiency. (1) Since $f_{1,\infty}^N$ is DC2', there exists an uncountable subset $S\subseteq X$ such that for any different points  $x,y\in S$ we have $\Phi(f_{1,\infty}^N,x,y,s)=0$ for some $s>0$, then there exist an increasing sequence $\{n_k\}$ of positive integers such that for some $\varepsilon>0$,
\begin{equation}\label{3.6}
\lim_{k\rightarrow\infty}\frac{1}{n_k}\sharp \{0\leq i\leq n_k-1\mid
d(f_1^{Ni}(x), f_1^{Ni}(y))<s\}=0,
\end{equation}
Applying Lemma \ref{yinli3}, it follows that there exists $0<p<s$ such that for any pair $x_1,y_1\in X$ with $d(x_1,y_1)<p$, $d(f_n^i(x_1),f_n^i(y_1))<s$ holds for any $1\le i \le N$ and any $n\in \mathbb{N}$. This means that for any $j$ with $d(f_1^{jN}(x),f_1^{jN}(y))\ge s$ and any $1\le i \le N$, $d(f_1^{jN-i}(x),f_1^{jN-i}(y))\ge p$. Then
\begin{equation}\label{3.7}
N(\delta_{n_k}(f_{1,\infty}^N,x,y,s)-1)\le \delta_{N{n_k}}(f_{1,\infty},x,y,p).
\end{equation}
Put $m_k=Nn_k$.
Notice that
\begin{equation}\label{3.8}
\frac{1}{{n_k}}\delta_{n_k}(f_{1,\infty}^N,x,y,s)=1-\frac{1}{{n_k}}\xi_{n_k}(f_{1,\infty}^N,x,y,s),
\end{equation}
and
\begin{equation}\label{3.9}
\frac{1}{{m_k}}\delta_{m_k}(f_{1,\infty},x,y,p)=1-\frac{1}{{m_k}}\xi_{m_k}(f_{1,\infty},x,y,p).
\end{equation}
Combining (\ref{3.7}),(\ref{3.8}) and (\ref{3.9}),we have that
$$\frac{1}{{m_k}}\xi_{m_k}(f_{1,\infty},x,y,p)\le\frac{1}{{n_k}}\xi_{n_k}(f_{1,\infty}^N,x,y,s)+\frac{1}{{n_k}}.$$
 This, together with (\ref{3.6}), imply that as $k\rightarrow\infty$,
 $$\frac{1}{{m_k}}\xi_{m_k}(f_{1,\infty},x,y,p)\rightarrow0.$$
 This shows that
$$\Phi(f_{1,\infty},x,y,p)=0.$$

(2) For any $t>0$, since $f_{1,\infty}$ is uniformly convergence, there exist $0<t_1<t$ such that for any $x,y\in X$ with $d(x,y)<t_1$, $d(f_n^i(x),f_n^i(y))<t$, for any $i\in \{1,...,N\}$ any $n\in \mathbb{N}$. Since $f_{1,\infty}^N$ is DC2', there exists an uncountable subset $S\subseteq X$ such that for any different points  $x,y\in S$ we have  $\Phi^{*}(f_{1,\infty}^N,x,y,s_1)>0$ for any $s_1>0$. Denote $\Phi^{*}(f_{1,\infty}^N,x,y,s_1)=\beta$. Then there exists an increasing sequence $\{m_l\}$ such that for $l>0$,
\begin{equation}\label{111}
\frac{1}{m_l}\sharp \{0\leq i\leq m_l\mid
d(f_1^{iN}(x), f_1^{iN}(y))<t_1\}> \frac{\beta}{2}.
\end{equation}
The choice of $t_1$ means that
$$\sharp \{0\leq i< Nm_l\mid
d(f_1^{i}(x), f_1^{i}(y))<t\}\ge N\sharp \{0\leq i< m_l\mid
d(f_1^{iN}(x), f_1^{iN}(y))<t_1\}.$$
This, together with (\ref{111}), imply that
\begin{equation}
\begin{aligned}
&\Phi^{*}(f,x,y,t)\\
\ge &\limsup_{l\rightarrow\infty}\frac{\sharp \{0\leq i< Nm_l\mid
d(f_1^{i}(x), f_1^{i}(y))<t\}}{Nm_l}\\
\ge &\limsup_{l\rightarrow\infty}\frac{\sharp \{0\leq i< m_l\mid
d(f_1^{iN}(x), f_1^{iN}(y))<t_1\}}{m_l}\\
> &\frac{\beta}{2} \\
> &0.
\end{aligned}
\end{equation}

\end{proof}

\begin{remark}
Just as in papers \cite{a26,a192} for the condition that DC1 and DC2 are iteration invariants, we believe that the conditions of theorem above also can be replaced to that $f_{1,\infty}$ is equi-continuous in $X$ or that $f_{1,\infty}$ is an $N$-convergent.
\end{remark}

\begin{theorem}
 Assume that $f_{1,\infty}$ is finitely generated or converges uniformly to a map $f$. Then for any $k\in\mathbb{N}$, $f_{1,\infty}$ is Kato's chaos if and only if $f_{1,\infty}^k$ is Kato's chaos.
\end{theorem}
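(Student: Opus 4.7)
The proof has two directions. \emph{Sufficiency} (from $f_{1,\infty}^k$ to $f_{1,\infty}$) is immediate: since the $n$-th iterate of $f_{1,\infty}^k$ along an orbit equals $f_1^{kn}$, any sensitivity or accessibility witness for $f_{1,\infty}^k$ at index $n \ge 1$ is simultaneously a witness for $f_{1,\infty}$ at index $kn \ge 1$. All the actual work is in \emph{necessity}, for which I plan the following.

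The workhorse is an equi-uniform-continuity claim: for every $\eta > 0$ there exists $\eta' > 0$ such that $d(a,b) < \eta'$ implies $d(f_{n+1}^r(a), f_{n+1}^r(b)) < \eta$ uniformly for all integers $n \ge 0$ and $0 \le r < k$. In the uniformly convergent case this follows from Lemma~\ref{yinli3} applied to each $r \in \{0,1,\dots,k-1\}$ (taking the minimum of the resulting moduli). In the finitely generated case it is equally direct: the family $\{f_{n+1}^r : n \ge 0,\, 0 \le r < k\}$ consists of at most $1 + |F| + \cdots + |F|^{k-1}$ distinct maps formed from the finite set $F$, each uniformly continuous on the compact space $X$, so a common modulus $\eta'$ exists.

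For \emph{sensitivity} of $f_{1,\infty}^k$, let $\delta$ be the sensitivity constant of $f_{1,\infty}$ and $\delta'$ the corresponding $\eta'$. Given a nonempty open $U \subseteq X$, shrink to an open $V \subseteq U$ of diameter less than $\delta'$. Apply sensitivity of $f_{1,\infty}$ on $V$ to obtain $x, y \in V$ and $m \ge 1$ with $d(f_1^m(x), f_1^m(y)) > \delta$, and write $m = kn + r$ with $0 \le r < k$. The small diameter of $V$ rules out $m < k$ (otherwise $f_1^m = f_1^r$ is in our equi-continuous family and $d(x,y) < \delta'$ would force $d(f_1^m(x), f_1^m(y)) < \delta$), so $n \ge 1$. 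Since $f_1^m = f_{kn+1}^r \circ f_1^{kn}$, the contrapositive of the equi-continuity yields $d(f_1^{kn}(x), f_1^{kn}(y)) \ge \delta'$, witnessing sensitivity of $f_{1,\infty}^k$ with constant $\delta'/2$.

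For \emph{accessibility} of $f_{1,\infty}^k$, given $\varepsilon > 0$ and nonempty open $U, V$, use the equi-continuity with $\eta = \varepsilon$ to obtain $\varepsilon'$, then apply accessibility of $f_{1,\infty}$ with tolerance $\varepsilon'$ to produce $x \in U$, $y \in V$, $m \ge 1$ with $d(f_1^m(x), f_1^m(y)) < \varepsilon'$. Writing $m = kn + r$ and applying $f_{m+1}^{k-r}$ (a composition of length $k-r \le k$) to both sides, equi-continuity yields $d(f_1^{k(n+1)}(x), f_1^{k(n+1)}(y)) < \varepsilon$, as required. The main obstacle is precisely this equi-continuity statement, which must be verified separately under the two alternative hypotheses, together with the need to force $n \ge 1$ in the sensitivity step; the diameter-shrinking trick resolves the latter cleanly.
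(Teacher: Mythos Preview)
Your proof is correct and follows essentially the same strategy as the paper's: establish equi-uniform continuity of the short compositions $f_{n+1}^r$ under either hypothesis, shrink the open set to force the sensitivity witness past time $k$ and pull back via the contrapositive, then push the accessibility witness forward to the next multiple of $k$. One tiny bookkeeping slip: in the accessibility step the composition $f_{m+1}^{k-r}$ has length $k$ when $r=0$, so your equi-continuity range should read $0 \le r \le k$ (or simply note that $r=0$ already gives $m=kn$ with $n\ge 1$, so nothing needs to be applied).
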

\begin{proof}
Sufficiency is obvious. We will prove the necessity in two steps.

 Step 1. Since  $f_{1,\infty}$ is Kato's chaos, there exists a $\delta>0$ such that for any nonempty open set $U$ of $X$, there exist $x,y\in U,n\geq1$, such that
 \begin{equation}\label{lian}
d(f_1^n(x),f_1^n(y))>\delta.
\end{equation}
If $f_{1,\infty}$ is finitely generated, since each $f_i$ of $f_{1,\infty}$ is continuous and $X$ is compact metric space, $f_m^j$ is uniformly continuous $(\forall j=1,2,...,k, \forall m\geq1)$. Therefore, for given $\delta$ above, there exists $\delta_{1a}>0$, such that when $d(u,v)<\delta_{1a}$, we have
\begin{equation*}
d(f_m^j(u),f_m^j(v))<\delta (\forall j=1,2,...,k, \forall m\geq1).
\end{equation*}
If $f_{1,\infty}$ converges uniformly to a map $f$, then by Lemma \ref{yinli3}, for given $\delta$ above, there exists $\delta_{1b}>0$, such that when $d(u,v)<\delta_{1b}$, we have
\begin{equation*}
d(f_m^j(u),f_m^j(v))<\delta (\forall j=1,2,...,k, \forall m\geq1).
\end{equation*}
Take $\delta_{1}=min\{\delta_{1a},\delta_{1b}\}$, then when $d(u,v)<\delta_1$, we have
\begin{equation}\label{lianxu}
d(f_m^j(u),f_m^j(v))<\delta (\forall j=1,2,...,k, \forall m\geq1).
\end{equation}
 Firstly, we will claim that there exists $n\in N_{f_{1,\infty}}(U,\delta)$ satisfying $n>k$. If not, then take a nonempty open set $U_1\subseteq U$ with $diam(U_1)<\delta_1$, on the one hand, by (\ref{lianxu}) we have that $\forall x,y\in U_1,n\leq k$, $d(f_1^n(x),f_1^n(x))<\delta$, on the other hand, there exist $x,y\in U_1,n_1\geq1$, such that $d(f_1^n(x),f_1^n(x))>\delta$, so $n_1>k$. Thus there is a $n\in N_{f_{1,\infty}}(U,\delta)$ satisfying $n>k$($n=n_1$). Now, take $j$ satisfying $n=kq+j$, where $q,j$ are positive integer and $1\leq j\leq k$. Secondly,  we will claim that there exists $n_2\geq1$, such that $d(f_1^{kn_2}(x),f_1^{kn_2}(y))>\delta_1/2$. If not, then for any $n_2\geq1$, we have $d(f_1^{kn_2}(x),f_1^{kn_2}(y))\leq \delta_1/2$, by (\ref{lianxu}) we have that  $d(f_1^{kn_2+j}(x),f_1^{kn_2+j}(y))\leq \delta$, therefore $d(f_1^n(x),f_1^n(y))\leq \delta$, which is a contradiction to (\ref{lian}).

 Step 2. For any $\varepsilon>0$ and nonempty open sets $U,V$ of $X$. As the same way, whether $f_{1,\infty}$ is finitely generated or converges uniformly to $f$, we have that for $\varepsilon>0$ above, there exists $\delta_2>0$, such that when $d(u,v)<\delta_2$, we have
\begin{equation}\label{lianxu1}
d(f_{m'}^{j'}(u),f_{m'}^{j'}(v))<\varepsilon (\forall j'=1,2,...,k, \forall {m'}\geq1).
\end{equation}
Since  $f_{1,\infty}$ is Kato's chaos, for $\delta_2$ above, there exist $x\in U, y\in V, n'\geq1$ such that $d(f_1^{n'}(x),f_1^{n'}(y))<\delta_2$. Take $j'$ satisfying $n'+j'=kq'$, where $q',j'$ are positive integer and $1\leq j'\leq k$. By (\ref{lianxu1}), $d(f_1^{n'+j'}(x),f_1^{n'+j'}(y))<\varepsilon$, that is $d(f_1^{kq'}(x),f_1^{kq'}(y))<\varepsilon$. Therefore, for any $\varepsilon>0$ and nonempty open sets $U,V$ of $X$, there exist $x\in U, y\in V, q' \geq1$ such that $d(f_1^{kq'}(x),f_1^{kq'}(y))<\varepsilon$.

\end{proof}

\begin{theorem}
Let $\{p_k\}_{k=1}^{\infty}$ be a sequence of positive integers, $\{A_i\}_{i=0}^{\infty}$ and $\{B_i\}_{i=0}^{\infty}$ be decreasing sequences of compact sets satisfying $$\bigcap_{i=0}^{\infty}A_i={a}, \bigcap_{i=0}^{\infty}B_i={b},$$ where $a\neq b$. If $\forall c=C_1C_2...$, where $C_k=A_k$ or $B_k$ for $k=0,1,2...$, there exists $x_c\in X$, such that $\forall k\geq1$, we have $f_1^{p_k}(x_c)\in C_k$, then $f_{ 1,\infty}$ is uniformly distributively chaotic in a sequence.
\end{theorem}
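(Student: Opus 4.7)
The plan is to embed an uncountable family of binary symbolic sequences into the orbits of $f_{1,\infty}$ using the coding hypothesis, and arrange that any two members of the family have an ``agreement'' subsequence of asymptotic density $1$ (forcing orbits close) together with a ``disagreement'' subsequence of asymptotic density $1$ (keeping orbits apart by at least a fixed distance). The uniform separation constant will be $\varepsilon := d(a,b)/3 > 0$.

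Since $\{A_i\}$ and $\{B_i\}$ are decreasing sequences of compact sets with $\bigcap_i A_i = \{a\}$ and $\bigcap_i B_i = \{b\}$, their diameters tend to zero, so there exists $N_0$ with $\mathrm{diam}(A_i), \mathrm{diam}(B_i) < \varepsilon$ for every $i \ge N_0$. Consequently, for $i \ge N_0$, any point of $A_i$ and any point of $B_i$ lie at distance at least $d(a,b) - 2\varepsilon = \varepsilon$, while any two points of the same $A_i$ or of the same $B_i$ are within a distance tending to $0$ as $i \to \infty$. Next I pick a rapidly growing sequence of positive integers $0 = L_0 < L_1 < L_2 < \cdots$ with $L_{k-1}/L_k \to 0$ (for instance $L_k = 2^{k^2}$), and partition $\mathbb{N}$ into the blocks $I_k := (L_{k-1}, L_k]$.

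Applying Lemma \ref{yinli20}, I obtain an uncountable set $E \subseteq \Sigma_2$ such that for any distinct $\alpha, \beta \in E$ both $\{k : \alpha_k = \beta_k\}$ and $\{k : \alpha_k \neq \beta_k\}$ are infinite. For each $\alpha = \alpha_1 \alpha_2 \cdots \in E$, I define the coding $c^\alpha = C_1^\alpha C_2^\alpha \cdots$ to be constant on blocks: for $i \in I_k$, set $C_i^\alpha := A_i$ if $\alpha_k = 0$ and $C_i^\alpha := B_i$ if $\alpha_k = 1$. The hypothesis yields a point $x^\alpha \in X$ with $f_1^{p_i}(x^\alpha) \in C_i^\alpha$ for every $i \ge 1$. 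Since $A_i \cap B_i = \emptyset$ for $i \ge N_0$ and any two distinct $\alpha, \beta \in E$ must disagree at some block index $k$ with $L_{k-1} \ge N_0$, the map $\alpha \mapsto x^\alpha$ is injective, so $S := \{x^\alpha : \alpha \in E\}$ is uncountable.

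The remaining step is to verify the distributional conditions for each distinct pair $x^\alpha, x^\beta \in S$. Given any $t > 0$, pick $K_t$ so that $\mathrm{diam}(A_i), \mathrm{diam}(B_i) < t$ whenever $i > L_{K_t}$; for each $k \ge K_t$ with $\alpha_k = \beta_k$, every $i \in I_k$ places both orbit points in the same shrinking set, so $\frac{1}{L_k} \sharp\{0 \le i < L_k : d(f_1^{p_i}(x^\alpha), f_1^{p_i}(x^\beta)) < t\} \ge (L_k - L_{k-1})/L_k \to 1$. Hence $\Phi^{*}(f_{1,\infty}, x^\alpha, x^\beta, t, p_k) = 1$ for every $t > 0$. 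Symmetrically, for $k$ with $L_{k-1} \ge N_0$ and $\alpha_k \ne \beta_k$, every $i \in I_k$ yields $d(f_1^{p_i}(x^\alpha), f_1^{p_i}(x^\beta)) \ge \varepsilon$, so $\frac{1}{L_k}\sharp\{0 \le i < L_k : d(f_1^{p_i}(x^\alpha), f_1^{p_i}(x^\beta)) < \varepsilon\} \le L_{k-1}/L_k \to 0$, giving $\Phi(f_{1,\infty}, x^\alpha, x^\beta, \varepsilon, p_k) = 0$ with the same $\varepsilon$ independent of the pair. The main obstacle is bridging the gap between Lemma \ref{yinli20}, which only supplies infinitely many agreements and disagreements, and the density-$1$ subsequences required by the definition; the rapidly growing block construction does precisely this, since any single late block of length $L_k - L_{k-1}$ with $L_{k-1}/L_k \to 0$ already contributes almost the entire mass up to time $L_k$.
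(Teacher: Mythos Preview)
Your proof is correct and follows essentially the same strategy as the paper's: both invoke Lemma~\ref{yinli20} to obtain an uncountable set of binary sequences, inflate each symbol into a rapidly growing block (the paper uses the factorial blocks $(n!,(n+1)!]$ where you use $(L_{k-1},L_k]$ with $L_k=2^{k^2}$), apply the coding hypothesis to produce the points $x^\alpha$, and then read off the density-$1$ agreement and disagreement subsequences from the block structure. Your treatment of the diameter estimates, the separation constant $\varepsilon=d(a,b)/3$, and the injectivity of $\alpha\mapsto x^\alpha$ is actually more careful than the paper's, but the underlying argument is identical.
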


\begin{proof}
Take the set $E$ as in lemma \ref{yinli20}. Then by the assumptions, for any $s=s_0s_1...\in E$, there exists $x_s\in X$, such that for each $k\geq1$, $n!<k\leq (n+1)!$, we have
\[f_1^{p_k}(x_s)\in\begin{cases}
A_k, &  s_n=0, \\
B_k, &  s_n=1.
\end{cases}\]
Put $D=\{x_s\mid s\in E\}$. It is easy to see that if $s\neq t$, then $x_s\neq x_t$. Since $E$ is uncountable set, $D$ is uncountable set.
Let $x_s,y_t\in D$ with $x_s\neq y_t$, where $s=s_0s_1...,t=t_0t_1...\in E$. By Lemma \ref{yinli20}, there exist sequences of positive integers $n_i\rightarrow\infty$ and $m_i\rightarrow\infty$ satisfying $s_{n_i}=t_{n_i},s_{m_i}\neq t_{m_i}$ for infinitely many $i$. Next we will just prove that $x_s,y_t$ are uniformly distributively chaotic in a sequence. The whole proof is divided into two steps.

Step 1. For any $\delta>0$, we take $i$ large enough such that $\frac{1}{n_i}<\frac{\delta}{2}$, and by the property of $f_1^{p_k}(x)$, for $n_i!<k\leq (n_i+1)!$, we have $d(f_1^{p_k}(x), f_1^{p_k}(y))<\delta$. Further, we have \begin{equation*}
\begin{aligned}
&\frac{1}{(n_i+1)!}\sharp \{1\leq k\leq (n_i+1)!\mid
d(f_1^{p_k}(x), f_1^{p_k}(y))<\delta\}\\
\geq&\frac{(n_i+1)!-n_i!}{(n_i+1)!}\\
=&1-\frac{1}{n_i+1}\\
\rightarrow& 1 (i\rightarrow\infty).
\end{aligned}
\end{equation*}
Therefore $\Phi_{xy}^{*}(f_{ 1,\infty}, \delta, p_{i})=1$.

Step 2. Put $\varepsilon=\frac{d(a,b)}{2}$. Take $i$ large enough such that $\frac{1}{m_i}<\frac{d(a,b)}{4}$, so $d(f_1^{p_k}(x), f_1^{p_k}(y))>\varepsilon$ for $m_i!<k\leq (m_i+1)!$. Thus
\begin{equation*}
\begin{aligned}
&\frac{1}{(m_i+1)!}\sharp \{1\leq k\leq (m_i+1)!\mid
d(f_1^{p_k}(x), f_1^{p_k}(y))<\varepsilon\}\\
\leq &\frac{m_i!}{(m_i+1)!}\\
=& \frac{1}{m_i+1}\\
\rightarrow & 0 (i\rightarrow\infty).
\end{aligned}
\end{equation*}

Therefore $\Phi_{xy}(f_{ 1,\infty}, \varepsilon, p_{i})=0$.

The entire proof is complete.

\end{proof}

Notice that uniformly distributively chaotic in a sequence implies Li-Yorke chaos by definitions, so we have the following result by the above theorem.

\begin{corollary}
Let $\{p_k\}_{k=1}^{\infty}$ be a sequence of positive integers, $\{A_i\}_{i=0}^{\infty}$ and $\{B_i\}_{i=0}^{\infty}$ be decreasing sequences of compact sets satisfying $$\bigcap_{i=0}^{\infty}A_i={a}, \bigcap_{i=0}^{\infty}B_i={b},$$ where $a\neq b$. If $\forall c=C_1C_2...$, where $C_k=A_k$ or $B_k$ for $k=0,1,2...$, there exists $x_c\in X$, such that $\forall k\geq1$, we have $f_1^{p_k}(x_c)\in C_k$, then $f_{ 1,\infty}$ is Li-Yorke chaos.
\end{corollary}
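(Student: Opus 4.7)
The plan is to invoke the preceding theorem directly and then show that uniform distributive chaos in a sequence implies Li-Yorke chaos; this second step is purely a definition-chase, so the only real content is packaging it correctly.

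First I would apply the preceding theorem to the given data. The hypotheses of the corollary are word-for-word the hypotheses of that theorem: the same sequence $\{p_k\}$, the same nested compact sets $\{A_i\}, \{B_i\}$ with $\bigcap A_i=\{a\}$, $\bigcap B_i=\{b\}$, $a\neq b$, and the same selection property guaranteeing, for every binary-indexed choice $c=C_1C_2\cdots$, a point $x_c$ with $f_1^{p_k}(x_c)\in C_k$. The theorem therefore yields an uncountable subset $D\subseteq X$ and an $\varepsilon>0$ (namely $\varepsilon=d(a,b)/2$ from the proof) such that for any two distinct $x,y\in D$,
\[
\Phi(f_{1,\infty},x,y,\varepsilon,p_k)=0
\quad\text{and}\quad
\Phi^{*}(f_{1,\infty},x,y,t,p_k)=1 \text{ for every } t>0.
\]

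Next I would read off the two Li-Yorke conditions from these two equalities along the sequence $\{p_k\}$. From $\Phi(f_{1,\infty},x,y,\varepsilon,p_k)=0$, the set $\{i : d(f_1^{p_i}(x),f_1^{p_i}(y))<\varepsilon\}$ has lower density $0$, so its complement is infinite and consequently $d(f_1^{p_i}(x),f_1^{p_i}(y))\ge \varepsilon$ for infinitely many $i$. This forces
\[
\limsup_{n\to\infty} d(f_1^{n}(x),f_1^{n}(y))\ge \varepsilon>0.
\]
From $\Phi^{*}(f_{1,\infty},x,y,t,p_k)=1$ for every $t>0$, the set $\{i : d(f_1^{p_i}(x),f_1^{p_i}(y))<t\}$ is infinite for every $t>0$; applying this with a sequence $t_m\downarrow 0$ and diagonalizing, one extracts a subsequence of $\{p_i\}$ along which the distances tend to $0$, hence
\[
\liminf_{n\to\infty} d(f_1^{n}(x),f_1^{n}(y))=0.
\]

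Combining the two displays with the uncountable set $D$, the pair $(D,f_{1,\infty})$ is a Li-Yorke scrambled set, so $f_{1,\infty}$ is Li-Yorke chaotic. There is no genuine obstacle here: the theorem does all the work, and the only point to be careful about is that the quantifiers line up, i.e.\ that the same $\varepsilon$ serves every pair in $D$ (which it does, since in the proof of the theorem $\varepsilon=d(a,b)/2$ is fixed independently of the pair). One could equivalently state and cite a short lemma of the form ``uniformly distributively chaotic in a sequence $\Rightarrow$ Li-Yorke chaotic,'' which is the sentence the authors insert just above the corollary; the proof above is essentially the justification of that sentence.
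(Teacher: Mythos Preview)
Your proposal is correct and matches the paper's own argument: the paper simply states that uniformly distributively chaotic in a sequence implies Li-Yorke chaos ``by definitions'' and then invokes the preceding theorem, which is exactly what you do (with the definition-chase spelled out). Your remark that the uniform $\varepsilon=d(a,b)/2$ is not actually needed for the Li-Yorke conclusion is accurate but harmless.
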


Next, we will give an example to show that $f_{ 1,\infty}$ is DC3 but $f_{ 1,\infty}^N$ is not.
\begin{example}
From \cite{a25} we know that there is a homeomorphism map $F:I^2\rightarrow I^2$ which display distribution chaos.
Put

\[f_i=\begin{cases}
F^{i+1}, &  i\in[2n-1:n\in \mathbb{N}], \\
F^{-i}, &  i\in[2n:n\in \mathbb{N}].
\end{cases}\]
It is easy to see that $f_{2n-1}\circ\cdot\cdot\cdot\circ f_1=F^{2n}$ and $f_{2n}\circ\cdot\cdot\cdot\circ f_0=id$. It is easy to see that the non-autonomous system $(I^2,f_{1,\infty})$ is DC3, but its 2nd iterate $(I^2,f_{1,\infty}^2)$ is not DC3. Additionally, it can be verified that the non-autonomous system $(I^2,f_{1,\infty})$ is Li-Yorke chaos but not distribution chaos, which means that for the non-autonomous system, distribution chaos implies Li-Yorke chaos but the inverse is not true.
\end{example}
\begin{remark}
From the example above we can see that if the condition 'uniform convergengce' was removed, then DC3 isnot preserved under iteration. However, if the condition 'uniform convergengce' was placed by 'convergengce', then Li-Yorke chaos is preserved under iteration too. We do not know whether DC1(DC2,DC3) is preserved under iteration if $f_{ 1,\infty}$ just converges to a map $f$.
\end{remark}

\noindent\textbf{Question.} Assume that non-autonomous discrete system $(X,f_{1,\infty})$ uniformly to a map $f$. Then for any $N\in\mathbb{N}$, Then $f_{1,\infty}$ is DC1-3 if and only if $f_{1,\infty}^N$ is DC1-3?



\end{document}